\newtheorem{theorem}{Theorem}[section]
\newtheorem{lemma}[theorem]{Lemma}
\newtheorem{proposition}[theorem]{Proposition}
\newtheorem{corollary}[theorem]{Corollary}
\theoremstyle{definition}
\newtheorem{definition}[theorem]{Definition}
\newtheorem{example}[theorem]{Example}
\theoremstyle{remark}
\newtheorem{remark}[theorem]{Remark}
\numberwithin{equation}{section}
\begin{document}

\title{Prime and primary ideals in characteristic one}

\author{{\bf Paul Lescot \rm}}

\address{LMRS \\
CNRS UMR 6085 \\
UFR des Sciences et Techniques \\
Universit\'e de Rouen \\
Avenue de l'Universit\'e BP12 \\
76801 Saint-Etienne du Rouvray (FRANCE)
Paul.Lescot@univ-rouen.fr \\
{http://www.univ-rouen.fr/LMRS/Persopage/Lescot/} \\
}

\date{01 September 2012}

\setcounter{section}{0}

\maketitle

\begin{abstract}

We study zero divisors and minimal prime ideals in semirings of characteristic one.
Thereafter we find a counterexample to the most obvious version of primary decomposition,
but are able to establish a weaker version.
Lastly, we study Evans'condition in this context.
\end{abstract}

\footnote{MSC : 06F05 , 20M12 , 08C99.}

\newpage

\section{Introduction}\label{S:intro}

Primary decomposition was first established in polynomial rings
(over $\mathbf Z$ or over a field) in Lasker's classical paper
(\cite{10}) ; another proof was later given by Macaulay (\cite{14}).
In her famous paper of 1921 (\cite{15}), Emmy Noether established the result for the class of rings that now bears her name.
Therefore Lasker's theorem led to the discovery of two of the main concepts of modern 
algebra : \it noetherian rings \rm and \it Cohen-Macaulay rings \rm .

The decomposition of an arbitrary ideal as an intersection of primary ones
is, \it via \rm the proof of Krull's Theorem, an essential tool in algebraic geometry (see \it e.g. \rm \cite{18}, pp.47--48).
The Riemann Hypothesis is arguably the most important open problem in mathematics ;
its natural analogue, Weil's conjecture (\cite{19}) was finally established by Deligne
(\cite{3}) using the whole strength of Grothendieck's \it theory of schemes\rm .

It has therefore long been expected (see \it e.g. \rm \cite{1} and \cite{17})
that an \lq\lq algebraic geometry in characteristic one\rq\rq might provide the 
natural framework for an approach of the Riemann Hypothesis. Many such theories have been propounded, including Deitmar's theory
of \it $F_{1}$--schemes \rm
(\cite{2}) and Zhu's \it characteristic one algebra \rm(\cite{20}). In \cite{11},\S 5, I have shown that part of Deitmar's theory embeds in a functorial
way into Zhu's ; the basic objects are $B_{1}$--algebras, 
\it i.e.  characteristic one semirings\rm , that is unitary semirings $A$ such that
$$
1_{A}+1_{A}=1_{A}\,\, .
$$
I have resolved to develop systematically and as far as possible the study of these objects.

As usual, we shall consider the set $B_{1}=\{0,1\}$ equipped with the usual multiplication
and addition, with the slight change that $1+1=1$.

In three previous articles (\cite{11}, \cite{12}, \cite{13}), we have shown that
$B_{1}$--algebras (\it i.e. \rm characteristic $1$ semirings) behave, in yet other
respects, like ordinary rings. In particular, one may define polynomial algebras over $B_{1}$
(\cite{11}, Theorem 4.5) and classify maximal congruences on them (\cite{11}, Theorem 4.8). 

There is a natural definition of a prime ideal (see \cite{12}, Definition 2.2)
in such a semiring ; the set $Pr_{s}(A)$of \it  saturated\rm (\cite{12}, Theorem 3.7)
prime ideals of $A$ can be endowed with a natural Zariski--type topology, to which most of the usual
topological properties of ring spectra carry over (see \cite{13}, Proposition 6.2).
In \cite{12}, we discussed the relationship between congruences and ideals in $B_{1}$--algebras ;
the two concepts are not equivalent, but \it excellent \rm congruences correspond bijectively
to \it saturated \rm ideals. The set $Pr_{s}(A)$ of saturated prime ideals of a $B_{1}$--algebra
$A$ is in bijection with the set $MaxSpec(A)$ of maximal (nontrivial) congruences on $A$;
that bijection is even a homeomorphism for the natural Zariski--type topologies (\cite{13}, Theorem 3.1),
and it is functorial (\cite{13}, Theorem 4.2).

It is therefore natural to examine whether higher results of commutative algebra
have analogs in the setting of $B_{1}$--algebras. Without any extra hypothesis, this is the case for the fundamental
properties of minimal (saturated) prime ideals (\S 3). Actually, modulo an hypothesis of
noetherian flavour, it appears that all minimal prime ideals (more generally, all associated prime
ideals) are saturated (\S 4).

The next natural question concerns a possible primary decomposition. The basic properties of
primary ideals carry over (\S 5), but Lasker--Noether primary decomposition need not hold, even though a weaker version can be established (\S 6). 
In other words, a (weakly) noetherian $B_{1}$--algebra
is not necessarily laskerian.

But it turns out (\S 7) that if the $B_{1}$--algebra is \it either \rm laskerian \it or \rm weakly
noetherian, it has the Evans property (first introduced in \cite{4}).

\section{Some definitions}
\label{sec:2}

We shall, except when explicitly mentioned otherwise, preserve the definitions and notation of
\cite{11}, \cite{12} and \cite{13}. We shall usually denote by $A$ a $B_{1}$--algebra ; 
$Pr(A)$ will denote the set of its prime ideals,
and $Pr_{s}(A)$ the set of its saturated prime ideals. $MinPr(A)$ and $MinPr_{s}(A)$
 will denote the sets of minimal elements (for inclusion) of $Pr(A)$ and $Pr_{s}(A)$,
 respectively. Classical arguments (see \it e.g. \rm \cite{7}, Proposition II.6, p.69)
establish that $(Pr(A),\supseteqq)$ and $(Pr_{s}(A),\supseteqq)$ are inductive.
 Therefore Zorn's Lemma implies that each prime (resp. prime saturated) ideal
 contains a minimal prime (resp. minimal prime saturated) ideal.

 By $Max_{s}(A)$ we denote the set of maximal elements among
 proper saturated ideals of $A$.
 
 The following two results are sometimes useful.
 \begin{proposition}
 $$
 Max_{s}(A)\subseteq Pr_{s}(A)\,\, .
 $$
 \end{proposition}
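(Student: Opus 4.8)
The plan is to imitate the classical proof that a maximal ideal of a commutative ring is prime, taking due care of the saturation operation. Let $\mathfrak{m}\in Max_{s}(A)$; by definition $\mathfrak{m}$ is a proper saturated ideal, so it only remains to check that $\mathfrak{m}$ is prime in the sense of \cite{12}, Definition 2.2. Arguing by contradiction, suppose there are $a,b\in A$ with $ab\in\mathfrak{m}$ but $a\notin\mathfrak{m}$ and $b\notin\mathfrak{m}$. First I would form the saturated ideal $\mathfrak{m}_{a}$ obtained by saturating the ideal generated by $\mathfrak{m}\cup\{a\}$, and likewise $\mathfrak{m}_{b}$. Each of these is a saturated ideal strictly containing $\mathfrak{m}$ (it contains $a$, resp.\ $b$, which is not in $\mathfrak{m}$), so the maximality of $\mathfrak{m}$ among proper saturated ideals forces $\mathfrak{m}_{a}=\mathfrak{m}_{b}=A$.

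The heart of the argument is then to turn the equalities $\mathfrak{m}_{a}=A$ and $\mathfrak{m}_{b}=A$ into a contradiction. Unwinding the description of the ideal generated by $\mathfrak{m}\cup\{a\}$ and of its saturation, the membership $1_{A}\in\mathfrak{m}_{a}$ should yield a relation expressing $1_{A}$ in terms of an element of $\mathfrak{m}$ and a multiple of $a$ (up to the relation defining saturation), and symmetrically for $b$. Multiplying these two relations, expanding, and using that $\mathfrak{m}$ is an ideal together with $ab\in\mathfrak{m}$, one finds that every term lands in $\mathfrak{m}$; since $\mathfrak{m}$ is saturated this gives $1_{A}\in\mathfrak{m}$, contradicting the properness of $\mathfrak{m}$. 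Here the characteristic one identity $1_{A}+1_{A}=1_{A}$ is exactly what prevents the cross terms from proliferating when the two relations are multiplied out, so the computation collapses as in the ring case.

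An alternative route is to pass to congruences: since $\mathfrak{m}$ is saturated it corresponds (\cite{12}, Theorem 3.7, and \cite{13}) to an excellent congruence on $A$, and the maximality of $\mathfrak{m}$ among proper saturated ideals should make that congruence maximal among (excellent) proper congruences, so that the quotient $B_{1}$--algebra is \lq\lq simple\rq\rq\ and in particular has no proper zero divisors; pulling this back gives that $\mathfrak{m}$ is prime. Either way, I expect the main obstacle to be the bookkeeping around the saturation operator: one must check that saturating the ideal generated by $\mathfrak{m}\cup\{a\}$ genuinely produces a \emph{saturated} ideal (so that maximality of $\mathfrak{m}$ is applicable to it), and that the resulting membership $1_{A}\in\mathfrak{m}_{a}$ can be recorded in a shape that is stable under multiplication by the analogous relation for $b$. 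Once these points are settled, the contradiction is immediate.
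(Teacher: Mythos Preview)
Your plan is the paper's proof: form $\overline{\mathfrak m+Aa}=\overline{\mathfrak m+Ab}=A$, extract witnesses $x\in\mathfrak m+Aa$, $y\in\mathfrak m+Ab$ with $1+x=x$ and $1+y=y$, and derive a contradiction; the paper also remarks that your congruence alternative works, via Theorem~3.3 of \cite{13}.

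One sharpening is needed at your ``multiply the two relations'' step. Literally multiplying $(1+x)(1+y)=xy$ and expanding leaves $x$ and $y$ on the left-hand side, and those are \emph{not} a priori in $\mathfrak m$, so ``every term lands in $\mathfrak m$'' does not hold for that expansion. What does work (and what the paper does) is to multiply $1+y=y$ by $a$ itself: writing $y=m'+\beta b$ one gets $ay=am'+\beta(ab)\in\mathfrak m$, and then $a+ay=a(1+y)=ay\in\mathfrak m$ forces $a\in\overline{\mathfrak m}=\mathfrak m$, the desired contradiction. Note that only the witness on the $b$ side is actually used.
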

 \begin{proof}
 Let $\mathcal M\in Max_{s}(A)$, and let us assume $u\notin \mathcal M$,
 $v\notin \mathcal M$, and $uv\in \mathcal M$. Then the maximality of $\mathcal M$
 yields $\overline{\mathcal M+Au}=\overline{\mathcal M+Av}=A$. Therefore one may find $x\in \mathcal M+Au$ and $y\in \mathcal M+Av$
 such that $1+x=x$ and $1+y=y$. Let us write $x=m+au$ and $y=m^{'}+bv$($(m,m^{'})\in \mathcal M^{2}$,
 $(a,b)\in A^{2}$). Then
 $$
 uy=um^{'}+b(uv)\in \mathcal M
 $$
 and
 $$
 u+uy=u(1+y)=uy\in \mathcal M \,\, ,
 $$
 whence $u\in \overline{\mathcal M}=\mathcal M$, a contradiction :
 $\mathcal M$ is prime. 
 Arguments such as the above will often recur in this paper.
 
We might also have used Theorem 3.3 from \cite{13}.
 \end{proof}
 
 \begin{lemma}Let $I$ and $J$ denote ideals of $A$ ; then
 $$
r(\overline{I\cap J})=r(\overline{I}\cap \overline{J})=r(\overline{I})\cap r(\overline{J})\,\, .
$$
\end{lemma}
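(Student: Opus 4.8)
The plan is to establish the two equalities $r(\overline{I}\cap\overline{J})=r(\overline{I})\cap r(\overline{J})$ and $r(\overline{I\cap J})=r(\overline{I})\cap r(\overline{J})$ separately. The tools I would use are standard facts recalled from \cite{12} and \cite{13}: the natural preorder $x\le y\iff x+y=y$ on a $B_{1}$--algebra, which is compatible with both operations; the description of the saturation of an ideal $I$ as $\overline{I}=\{x\in A\mid x\le i\text{ for some }i\in I\}$ (this is exactly the mechanism used in the proof of the Proposition above, where $u+uy=uy\in\mathcal M$ forces $u\in\overline{\mathcal M}$); monotonicity of both $r$ and the saturation map $I\mapsto\overline{I}$; the fact that $r(K)$ is an ideal for any ideal $K$; and the remark that if $x^{m}$ lies in an ideal then so does $x^{n}$ for every $n\ge m$.

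The first equality is purely formal, identical to the classical computation of $\sqrt{K\cap L}$: the inclusion $\subseteq$ is immediate because $\overline{I}\cap\overline{J}$ is contained in each of $\overline{I}$ and $\overline{J}$, and conversely if $x^{m}\in\overline{I}$ and $x^{n}\in\overline{J}$ then $x^{m+n}=x^{m}x^{n}$ lies in both ideals $\overline{I}$ and $\overline{J}$, hence in $\overline{I}\cap\overline{J}$, so $x\in r(\overline{I}\cap\overline{J})$. For the second equality, the inclusion $r(\overline{I\cap J})\subseteq r(\overline{I})\cap r(\overline{J})$ follows by monotonicity alone: from $I\cap J\subseteq I$ we get $\overline{I\cap J}\subseteq\overline{I}$ and then $r(\overline{I\cap J})\subseteq r(\overline{I})$, and symmetrically for $J$.

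The one step requiring the semiring structure, and the one I would write out most carefully, is the remaining inclusion $r(\overline{I})\cap r(\overline{J})\subseteq r(\overline{I\cap J})$. Given $x$ in the left-hand side, after replacing the two exponents by a common large $N$ we have $x^{N}\in\overline{I}$ and $x^{N}\in\overline{J}$, so there exist $i\in I$ and $j\in J$ with $x^{N}\le i$ and $x^{N}\le j$. Multiplying the first inequality by $x^{N}$ gives $x^{2N}\le x^{N}i$, and multiplying the second by $i$ gives $x^{N}i\le ij$; transitivity yields $x^{2N}\le ij$. Since $ij\in I\cap J$ (because $i\in I$, $j\in J$, and $I,J$ are ideals), this means $x^{2N}\in\overline{I\cap J}$, i.e.\ $x\in r(\overline{I\cap J})$. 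I do not anticipate a genuine obstacle here beyond checking that all the order-theoretic prerequisites — that $\le$ is a preorder compatible with $+$ and $\cdot$, that saturation is monotone with the stated description, and that $r$ of an ideal is an ideal — are indeed available from the earlier papers.
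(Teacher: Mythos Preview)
Your argument is correct and is essentially the paper's own proof, rewritten in the preorder notation $x\le y\iff x+y=y$: the paper likewise reduces to the single nontrivial inclusion $r(\overline{I})\cap r(\overline{J})\subseteq r(\overline{I\cap J})$ and derives it by multiplying the relations $u^{m}+i=i$ and $u^{n}+j=j$ to reach $u^{m+n}\in\overline{I\cap J}$ via the element $ij\in I\cap J$. The only cosmetic differences are that you pass to a common exponent $N$ and invoke transitivity of $\le$ in one stroke, whereas the paper keeps $m,n$ separate and uses $\overline{\overline{I\cap J}}=\overline{I\cap J}$ at the last step.
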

\begin{proof}
The inclusions 
$$
r(\overline{I\cap J})\subseteq r(\overline{I}\cap \overline{J})\subseteq r(\overline{I})\cap r(\overline{J})
$$
are clearly valid.
Let now $u\in r(\overline{I})\cap r(\overline{J})$ ; then $u^{m}\in\overline{I}$ for some $m\geq 1$
and $u^{n}\in \overline{J}$ for some $n\geq 1$. Thus one may find
$i\in I$ and $j\in J$ with $u^{m}+i=i$ and $u^{n}+j=j$.
Then $u^{m}j+ij=ij\in I \cap J$, whence $u^{m}j\in \overline{I\cap J}$.
But
$$
u^{m+n}+u^{m}j=u^{m}j
$$
whence
$$
u^{m+n}\in \overline{\overline{I\cap J}}=\overline{I\cap J}\,\, ,
$$
and 
$$
u\in r(\overline{I\cap J}) \,\, .
$$
Therefore
$$
r(\overline{I})\cap r(\overline{J})\subseteq r(\overline{I\cap J})\,\, ,
$$
and the result follows.
\end{proof}

 For $s\in A$ we define the \it annihilator \rm of $s$ by
 $$
 Ann_{A}(s):=\{x\in A \vert sx=0\}.
 $$
 It is clearly an ideal of $A$ ; furthermore, from $y\in  Ann_{A}(s)$ and $x+y=y$ follows
 $$
 sx=sx+0=sx+sy=s(x+y)=sy=0 \,\, ,
 $$
 thus $x\in Ann_{A}(s)$ : $Ann_{A}(s)$ is saturated.
 
  For $S$ a subset of $A$, we define
 $$
 Ann_{A}(S):=\bigcap_{s\in S}Ann_{A}(s) \,\, ;
 $$
 as an intersection of saturated ideals of $A$, it is a saturated ideal of $A$.
 
  For $x\in A\setminus\{0\}$, let
  $$
  \tilde{A}_{x}:=\displaystyle\frac{A}{\mathcal R_{Ann_{A}(x)}}\,\, ,
  $$
  and let $\pi_{x}:A\twoheadrightarrow \tilde{A}_{x}$ denote the canonical
  projection.
  \begin{definition}An ideal $\mathcal P$ of $A$ is termed \it associated \rm to $x\in A\setminus \{0\}$ if it can
  be expressed as $\mathcal P=\pi_{x}^{-1}(\mathcal Q)$ for some 
  minimal prime ideal $\mathcal Q$ of $\tilde{A}_{x}$ ; it is termed \it associated \rm if it is associated to some
  $x\in A\setminus \{0\}$.
  \end{definition}
  $Ass(A)$ will denote the set of associated ideals of $A$ ; clearly, 
  $$
  Ass(A)\subseteq Pr(A)\,\, .
  $$
  
  Obviously, each minimal prime ideal of $A$ is associated ($x=1$ is suitable),
  whence $MinPr(A)\subseteq Ass(A)$.
 
 The elements of the set
 $$
 \mathcal D_{A}:=\{a\in A\setminus \{0\} \,\, \vert \,\, (\exists b\in A\setminus \{0\}) \,\,  ab=0 \}
 $$
 are called \it zero--divisors \rm in $A$.
 Clearly, for $A$ non--trivial, one has
 $$
 \mathcal D_{A}\cup\{0\}=\bigcup_{s\in A\setminus\{0\}}Ann_{A}(s)\,\, .
 $$
 An ideal $I$ of $A$ will be termed \it radical \rm if $I=r(I)$ ;
 from \cite{13}, Proposition 5.5, it follows easily that a saturated 
 ideal is radical if and only if it is an intersection of prime (saturated)
 ideals.
\begin{definition}The $B_{1}$--algebra $A$ is \it noetherian \rm 
if every ascending chain of ideals of $A$ is ultimately stationary.
\end{definition}

By standard arguments (see \it e.g. \rm \cite{8}, Proposition I.2, p.47), 
this is equivalent to the assertion
that every ideal of $A$ is finitely generated.

\begin{definition}The $B_{1}$--algebra $A$ is \it weakly noetherian \rm 
if every ascending chain of saturated ideals of $A$ is ultimately stationary.
\end{definition}

It is obvious that every noetherian $B_{1}$--algebra is weakly noetherian.
The converse is false ; in fact, $B_{1}[x]$ is weakly noetherian
(it follows from the reasoning used in the proof of \cite{11}, Theorem 4.2 that its saturated ideals are 
$\{0\}$ and the $x^{n}B_{1}[x](n\in \mathbf N$)) but not noetherian
(one may even find, in $B_{1}[x]$, a strictly increasing sequence of \bf prime \rm
ideals : cf. \cite{9}, ch.3, p.65).

\begin{definition}We call the $B_{1}$--algebra $A$ \it standard \rm if $\mathcal D_{A}\cup\{0\}$
is a finite union of saturated prime ideals of $A$.
\end{definition}

\begin{definition}For $J$ an ideal of $A$ and $x\in A$, let
$$
\mathcal C_{x}(J):=\{y\in A \vert xy\in J \}.
$$
\end{definition}
Clearly, $\mathcal C_{x}(J)$ is an ideal of $A$, saturated whenever $J$ is ; furthermore, 
$$\mathcal C_{x}(\{0\})=Ann_{A}(x)\,\, .$$
For saturated $J$, the ideals of the form $\mathcal C_{x}(J)$($x\notin J$) will be termed \it $J$--conductors \rm .

\begin{lemma}Let $J$ be a saturated ideal of $A$, $y\notin J$, and assume that
$$
\mathcal P:=\mathcal C_{y}(J)
$$ 
is a maximal element of the set of
$J$--conductors. Then $\mathcal P\in Pr(A)$.
\end{lemma}
\begin{proof}

One has $1\notin \mathcal P$ (as $y\notin J$), whence
$\mathcal P\neq A$. Let us assume $uv\in \mathcal P$ and 

$u\notin \mathcal P$ ;
then $uy\notin J$ and $\mathcal C_{y}(J)\subseteq \mathcal C_{uy}(J)$.
It follows that
$$
\mathcal P=\mathcal C_{y}(J)=\mathcal C_{uy}(J)\,\, ;
$$
but
$$
v(uy)=(uv)y\in J \,\, ,
$$
whence
$$
v\in C_{uy}(J)=\mathcal P \,\, :
$$
$\mathcal P$ is prime.
\end{proof}

\section{Minimal prime ideals}
\label{sec:3}
In this paragraph, $A$ will denote an arbitrary $B_{1}$--algebra.

\begin{theorem} Let $\mathcal P\in MinPr(A)\cup MinPr_{s}(A)$ ; then each nonzero element of $\mathcal P$
is a zero--divisor in $A$.
\end{theorem}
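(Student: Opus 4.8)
The plan is to imitate the classical commutative-algebra argument: a minimal prime is characterized by the fact that localizing at it kills everything outside, so every element inside must become nilpotent, hence a zero-divisor. In the $B_1$-setting I will work with the "conductor" ideals $\mathcal C_x(J)$ introduced above and with the fact (Lemma on $J$-conductors) that a maximal $J$-conductor is prime.

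First I would fix $\mathcal P \in MinPr(A) \cup MinPr_s(A)$ and a nonzero $p \in \mathcal P$, and aim to produce $b \neq 0$ with $pb = 0$. The idea is to consider the set of $\{0\}$-conductors $\mathcal C_x(\{0\}) = Ann_A(x)$, or more precisely to look inside $\mathcal P$ at ideals of the form $\mathcal C_x(\{0\})$ and at the localization-type quotient. Concretely, I would argue by contradiction: suppose $p \in \mathcal P$ is not a zero-divisor, i.e. $Ann_A(p) = \{0\}$. Then I want to show $\mathcal P$ is not minimal, by exhibiting a strictly smaller prime (resp. saturated prime) ideal contained in it. The natural candidate is built from a maximal element of the set of $\{0\}$-conductors $\mathcal C_y(\{0\})$ with $y$ ranging over suitable elements not in $\{0\}$ — by the Lemma this maximal conductor $\mathcal Q$ is prime, it is contained in $\mathcal D_A \cup \{0\}$, hence it misses $p$ (since $p$ is not a zero-divisor), so $\mathcal Q \subsetneq \mathcal P$, contradicting minimality. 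To get existence of such a maximal conductor one invokes Zorn's Lemma on the inductive poset of conductors, exactly as the excerpt does for $Pr(A)$ and $Pr_s(A)$; one should check the chain condition is automatic because a union of an ascending chain of conductors $\mathcal C_{x_i}(\{0\})$ is again of that form (or is contained in one), which is the only slightly fussy point.

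An alternative, perhaps cleaner, route uses the quotient $\tilde A_p = A/\mathcal R_{Ann_A(p)}$ already defined in the text. If $Ann_A(p) = \{0\}$ then $\tilde A_p = A$ and $\pi_p$ is the identity, so the minimal primes of $\tilde A_p$ are exactly the minimal primes of $A$; but one also knows (this is essentially the defining property of $\tilde A_p$) that the image of $p$ in $\tilde A_p$ lies in no minimal prime — more carefully, one localizes to make $p$ a non-zero-divisor and observes that a minimal prime of the localization, pulled back, avoids $p$. Either way, the conclusion is that $\mathcal P$ strictly contains a prime (saturated, when $\mathcal P$ is saturated — here one uses that $Ann_A(s)$ is saturated, already proved, so all the conductors and the resulting prime stay in $Pr_s(A)$), contradicting $\mathcal P \in MinPr(A) \cup MinPr_s(A)$.

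The main obstacle I anticipate is not the structure of the argument but the bookkeeping peculiar to semirings: one must make sure every ideal produced along the way is saturated when $\mathcal P$ is saturated (so that the contradiction lands in $MinPr_s(A)$ and not merely $MinPr(A)$), and one must handle the passage to the quotient $\tilde A_p$ carefully, since in characteristic one "localization" is replaced by quotienting by the congruence $\mathcal R_{Ann_A(p)}$ and the correspondence between primes upstairs and downstairs needs the results of \cite{12} and \cite{13}. I would therefore carry out the conductor version, where everything stays inside $A$ and saturation of $Ann_A(s)$ (and hence of $\mathcal C_x(\{0\})$) has already been recorded, making the saturated case and the general case uniform.
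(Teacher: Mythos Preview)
Your proposal has two genuine gaps.

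First, the conductor approach. You want a maximal $\{0\}$-conductor $\mathcal Q=Ann_A(y)$, prime by Lemma~2.8. But the poset of annihilators is \emph{not} inductive in general: the union of a chain of annihilators need not be an annihilator, and your parenthetical ``only slightly fussy point'' is in fact a real obstruction that typically needs a noetherian-type hypothesis, whereas the theorem is stated for an arbitrary $B_1$-algebra. More seriously, even granting a maximal annihilator $\mathcal Q$, you give no argument for $\mathcal Q\subseteq\mathcal P$. Knowing that $\mathcal Q$ consists of zero-divisors while $p$ is not one yields only $p\notin\mathcal Q$; it does not make $\mathcal Q$ and $\mathcal P$ comparable, so minimality of $\mathcal P$ is never contradicted.

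Second, the route via $\tilde A_p$ is circular: if $Ann_A(p)=\{0\}$ then $\tilde A_p=A$ and $\pi_p=\mathrm{id}$, but the claim that ``the image of $p$ lies in no minimal prime'' is precisely the statement you are trying to prove.

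The paper's proof builds $\mathcal P$ into the construction from the start. It uses the multiplicative set $S=\{ax^n : a\in A\setminus\mathcal P,\ n\ge 0\}$ (with $x=p$): if $x$ is not a zero-divisor then $0\notin S$, and the family of ideals $I$ with $S\cap I=\emptyset$ \emph{is} trivially inductive (it is an avoidance condition, preserved under unions of chains). A maximal such $I$ is prime, is contained in $\mathcal P$ (take $n=0$), and omits $x$ (take $a=1$, $n=1$), contradicting minimality. The saturated case runs the same argument over saturated ideals, with the expected closure manipulations. The missing idea in your sketch is exactly this: one must couple the avoidance of $x$ with containment in $\mathcal P$ \emph{before} applying Zorn, not hope to recover the containment afterwards.
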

\begin{proof}Let $x\in \mathcal P\in MinPr(A)$, $x\neq 0$, and assume that $x$ is not a zero--divisor ; then, for
each $a\in A\setminus \{0\}$ and $n\in \mathbf N$, $ax^{n}\neq 0$.
In particular
$$
\forall n\in \mathbf N \,\, \forall a\in A\setminus \mathcal P  \,\, ax^{n}\neq 0.
$$
Let $\mathcal E$ denote the set of ideals $I$ of $A$ such that
$$
\forall n\in \mathbf N \,\, \forall a\in A\setminus \mathcal P \,\,  ax^{n}\notin I \, . \,\,\,\, (*)
$$
Then $\mathcal E\neq \emptyset$ (as $\{0\}\in \mathcal E$), and $\mathcal E$ is inductive for $\subseteq$, whence $\mathcal E$
contains a maximal element $I$. As $1=1.x^{0}\notin I$, $I\neq A$.

Let us suppose for a moment that $uv\in I$, $u\notin I$ and $v\notin I$ ; then $I+Au$ and $I+Av$
are ideals of $A$ strictly containing $I$, whence $I+Au\notin \mathcal E$ and $I+Av\notin \mathcal E$.
Thus one may find $(a,b)\in (A\setminus \mathcal P)^{2}$, $(i,j)\in I^{2}$, $(c,d)\in A^{2}$ and $(m,n)\in \mathbf N^{2}$
with $ax^{m}=i+cu$ and $bx^{n}=j+dv$. 

Then $ab\in A\setminus \mathcal P$ (as $\mathcal P$ is prime)
and
\begin{eqnarray}
abx^{m+n}
&=&(ax^{m})(bx^{n}) \nonumber \\
&=&(i+cu)(j+dv) \nonumber \\
&=&i(j+dv)+(cu)j+(cd)(uv)\in I \,\, , \nonumber
\end{eqnarray}
a contradiction. Therefore $I$ is prime.
But, by definition,
$$
\forall a\in A\setminus \mathcal P  \,\, a=ax^{0}\notin I,
$$
whence $A\setminus \mathcal P\subseteq A\setminus I$, or $I\subseteq \mathcal P$.
The minimality of $\mathcal P$ now implies that
$$
I=\mathcal P \,\, ,
$$
whence $1.x^{1}=x\in \mathcal P=I$,
contradicting the definition of $I$
(we have essentially followed \cite{5}, Corollary 1.2,
and \cite{8}, p. 34, Lemma 3.1).

In case $\mathcal P\in MinPr_{s}(A)$, the same argument applies modulo a slight complication : 
by defining $\mathcal E$ to be the set of \it saturated \rm ideals $I$ of $A$ satisfying $(*)$,
we find a maximal element $I$ of $\mathcal E$, and have $I\neq A$. Assuming $uv\in I$, $u\notin I$
and $v\notin I$, we see that that $\overline{I+Au}\notin \mathcal E$ and $\overline{I+Av}\notin \mathcal E$.
Therefore we may find $(a,a^{'})\in (A\setminus \mathcal P)^{2}$ and $(m,n)\in \mathbf N^{2}$
such that $ax^{m}\in \overline{I+Au}$ and $a^{'}x^{n}\in \overline{I+Av}$.
Therefore $ax^{m}+y=y$ for some $y\in I+Au$,
and $a^{'}x^{n}+z=z$ for some $z\in I+Av$. Set $y=i+cu$ ($i\in I$) and $z=i^{'}+dv$ ($i^{'}\in I$) ;
then
$$
yv=iv+c(uv)\in I \,\, .
$$
As
\begin{eqnarray}
ax^{m}v+yv
&=&(ax^{m}+y)v \nonumber \\
&=&yv \nonumber
\end{eqnarray}
and $I$ is saturated, it appears that $ax^{m}v\in I$.
Then

\begin{eqnarray}
ax^{m}z
&=&ax^{m}(i^{'}+dv) \nonumber \\
&=&ax^{m}i^{'}+d(ax^{m}v)\nonumber \\
&\in& I \,\, ; \nonumber
\end{eqnarray}
as
\begin{eqnarray}
aa^{'}x^{m+n}+ax^{m}z
&=&ax^{m}(a^{'}x^{n}+z) \nonumber \\
&=&ax^{m}z \,\, ,\nonumber 
\end{eqnarray}
it follows as above that $aa^{'}x^{m+n}\in I$. But $aa^{'}\in A \setminus \mathcal P$, contradicting the
definition of $I$.

\end{proof}

\section{The weakly noetherian case}
\label{sec:4}

\begin{theorem}In case $A$ is weakly noetherian, each associated prime ideal of $A$ is of the
form $Ann_{A}(u)$ for some $u\in A\setminus \{0\}$ ; in particular, it is saturated.
\end{theorem}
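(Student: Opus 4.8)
The plan is to realise $\mathcal P$ as the annihilator of a well-chosen element, in the spirit of the classical fact that over a noetherian ring every associated prime is an annihilator; the weakly noetherian hypothesis will enter only through an ACC (maximality) argument on saturated ideals. Set $I:=Ann_A(x)$. Since $\mathcal P=\pi_x^{-1}(\mathcal Q)$ with $\mathcal Q$ a minimal prime of $\tilde A_x=A/\mathcal R_I$ and $I$ is saturated (so that $\pi_x^{-1}(\{0\})=I$ and $\pi_x^{-1}$ identifies $Pr(\tilde A_x)$ with the set of primes of $A$ containing $I$, preserving inclusions), $\mathcal P$ is a prime of $A$ which is minimal among the primes containing $I$. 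I would also use the usual description of such a minimal prime: for every $p\in\mathcal P$ there exist $s\notin\mathcal P$ and $n\geq 1$ with $sp^n\in I$. This last point is the analogue, for an arbitrary $I$, of Theorem 3.1, proved by the same Krull-type device (a maximal ideal containing $I$ and disjoint from the multiplicative set generated by $A\setminus\mathcal P$ and $p$ is prime, exactly as in the proofs of Proposition 2.1 and Theorem 3.1); alternatively it can be quoted from \cite{13}.

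As $A$ is weakly noetherian, the non-empty family $\mathcal F:=\{Ann_A(v)\,:\,v\in A\setminus\{0\},\ I\subseteq Ann_A(v)\subseteq\mathcal P\}$ of saturated ideals (it contains $Ann_A(x)=I$) has a maximal element $\mathcal N=Ann_A(w)$. Suppose $ab\in\mathcal N$ with $a\notin\mathcal N$ and $b\notin\mathcal N$. Then $aw\neq 0$, and $Ann_A(aw)$ is a saturated ideal which contains $\mathcal N$ (hence $I$) and contains $b$, so it contains $\mathcal N$ strictly; by maximality of $\mathcal N$ in $\mathcal F$ it is not contained in $\mathcal P$, so some $c\notin\mathcal P$ annihilates $aw$, whence $ca\in Ann_A(w)=\mathcal N\subseteq\mathcal P$ and therefore $a\in\mathcal P$; by symmetry $b\in\mathcal P$. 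Two consequences: (i) $ab\in\mathcal N$ and $a\notin\mathcal N$ imply $b\in\mathcal P$ (either $b\in\mathcal N\subseteq\mathcal P$, or $b\notin\mathcal N$ and then $b\in\mathcal P$ by the above); and (ii) if $a\notin\mathcal P$ and $ab\in\mathcal N$ then $b\in\mathcal N$ (for $a\notin\mathcal P$ forces $a\notin\mathcal N$, and if also $b\notin\mathcal N$ the above would give $a\in\mathcal P$). Now feed the minimal-prime description into (ii): for $p\in\mathcal P$ choose $s\notin\mathcal P$, $m\geq 1$ with $sp^m\in I\subseteq\mathcal N$; then $p^m\in\mathcal N$ by (ii). Hence $\mathcal P\subseteq r(\mathcal N)$, and since $\mathcal N\subseteq\mathcal P$ this gives $r(\mathcal N)=\mathcal P$.

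It remains to prove $\mathcal N=\mathcal P$. If not, pick $p\in\mathcal P\setminus\mathcal N$. Since $p\in r(\mathcal N)$ there is a least $\ell\geq 1$ with $p^\ell\in\mathcal N$, and $\ell\geq 2$ because $p\notin\mathcal N$. Put $w':=p^{\ell-1}w$; then $w'\neq 0$ since $p^{\ell-1}\notin Ann_A(w)=\mathcal N$, and $Ann_A(w')=\{z\in A:\,zp^{\ell-1}\in\mathcal N\}$. This ideal contains $\mathcal N$ (hence $I$); it contains $p$, because $p\cdot p^{\ell-1}=p^\ell\in\mathcal N$, so it contains $\mathcal N$ strictly; and it is contained in $\mathcal P$, by (i) applied with $a=p^{\ell-1}\notin\mathcal N$. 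Thus $Ann_A(w')\in\mathcal F$ and $Ann_A(w')\supsetneq\mathcal N$, contradicting maximality. Therefore $\mathcal N=\mathcal P$, i.e. $\mathcal P=Ann_A(w)$ with $w\in A\setminus\{0\}$; being an annihilator, $\mathcal P$ is saturated by \S 2, which is the last assertion.

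The step I expect to be delicate is the choice and handling of the family $\mathcal F$: one must take the annihilators squeezed \emph{between $I$ and $\mathcal P$} — neither merely those contained in $\mathcal P$, nor merely those containing $I$ — for otherwise the auxiliary annihilators $Ann_A(aw)$ and $Ann_A(w')$ need not lie in $\mathcal F$ and the maximality is useless. The fact that these auxiliaries do lie in $\mathcal F$, specifically that they sit below $\mathcal P$, is itself a consequence of the "primary-like" behaviour (i) of $\mathcal N$, which was just obtained from maximality; so the argument has a mild bootstrap character that must be unrolled in the correct order. The other point needing care is establishing the minimal-prime description in the $B_1$-setting, but that is a routine transcription of the method of \S 3.
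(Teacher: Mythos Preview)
Your argument is correct, and it follows a genuinely different route from the paper's.

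The paper does not work with single annihilators squeezed between $I=Ann_A(x)$ and $\mathcal P$. Instead it introduces, for each $y$ with $Ann_A(xy)\subseteq\mathcal P$, the \emph{union} $I_{\mathcal P}(y)=\bigcup_{s\notin\mathcal P}Ann_A(sxy)$, picks a maximal one $J=I_{\mathcal P}(y_0)$ by weak noetherianity, shows directly that $J$ is prime, and then uses the minimality of $\mathcal Q$ in $\tilde A_x$ to conclude $J=\mathcal P$. At that point $\mathcal P$ is only known to be a \emph{union} of annihilators, so the paper invokes weak noetherianity a \emph{second} time to write $\mathcal P=\overline{\langle p_1,\dots,p_n\rangle}$ and then collapse to a single annihilator $Ann_A(s_1\cdots s_n x y_0)$. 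Your approach avoids this second appeal: by maximising over honest annihilators from the start and closing the gap $\mathcal N\subsetneq\mathcal P$ with the least-exponent trick on $p^{\ell-1}w$, you get $\mathcal P=Ann_A(w)$ in one stroke. The price you pay is having to import the minimal-prime characterisation ($p\in\mathcal P\Rightarrow\exists\, s\notin\mathcal P,\ n\ge 1,\ sp^n\in I$), which the paper's union construction builds in implicitly.

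One small caveat: your assertion that $\pi_x^{-1}$ identifies $Pr(\tilde A_x)$ with \emph{all} primes of $A$ containing $I$ is not quite right in this setting---the inverse correspondence $\mathcal P'\mapsto\pi_x(\mathcal P')$ only returns $\mathcal P'$ when $\mathcal P'$ is saturated (cf.\ the computation at the end of the paper's proof). But this does not matter for you: the minimal-prime description you actually need follows immediately by running the Krull-type argument of \S 3 inside $\tilde A_x$ for the minimal prime $\mathcal Q$ and then pulling back, exactly as you suggest.
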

\begin{proof}
Let $\mathcal P$ denote a prime ideal of $A$ associated to $x\in A\setminus\{0\}$ ; then 
$$
\mathcal P=\pi_{x}^{-1}(\mathcal Q)
$$ 
for some $Q\in MinPr(\tilde{A}_{x})$.
We define
$$
\mathcal W(\mathcal P):=\{z\in A \vert Ann_{A}(zx)\subseteq \mathcal P\}\,\, .
$$

$\mathcal W(\mathcal P)$ is non--empty, as $1\in \mathcal W(\mathcal P)$.
For $y\in \mathcal W(\mathcal P)$, let
$$
I_{\mathcal P}(y):=\bigcup_{s\in A\setminus \mathcal P}Ann_{A}(sxy) \,\, .
$$
As
$$
\forall (s,s^{'})\in (A\setminus \mathcal P)^{2}
$$
$$
Ann_{A}(sxy)\cup Ann_{A}(s^{'}xy)\subseteq Ann_{A}(ss^{'}xy) \,\, 
$$
and $ss^{'}\in A \setminus \mathcal P$, $I_{\mathcal P}(y)$ is the union of a filtering family
of saturated ideals, whence it is itself a saturated ideal.  

By definition, whenever $y\in \mathcal W(\mathcal P)$, $Ann_{A}(xy)\subseteq \mathcal P$, therefore
from \newline
$s\in A\setminus \mathcal P$ and $z\in Ann_{A}(sxy)$ follows
$$
(sz)(xy)=(sxy)z=0 \,\, ,
$$
thus $sz\in Ann_{A}(xy)\subseteq \mathcal P$, $sz\in\mathcal P$ and $z\in \mathcal P$. We have shown that
$$
I_{\mathcal P}(y)\subseteq \mathcal P \,\, .
$$

Let now $J:=I_{\mathcal P}(y_{0})$ denote a maximal element of
$$
\{I_{\mathcal P}(y)\vert y\in \mathcal W(\mathcal P)\}\,\, 
$$
(the existence of such an element follows from the weak noetherianity hypothesis).
As seen above, $J\subseteq \mathcal P$, whence $J\neq A$.
Let us suppose $ab\in J$ and $a\notin J$ ; then, for each $s\in A\setminus \mathcal P$,
$a\notin Ann_{A}(sxy_{0})$, whence
$$
s(xy_{0}a)=(sxy_{0})a \neq 0 \,\, .
$$
Therefore $Ann_{A}(xy_{0}a)\subseteq \mathcal P$, \it i.e. \rm $y_{0}a\in \mathcal W(\mathcal P)$.
Clearly $I_{\mathcal P}(y_{0})\subseteq I_{\mathcal P}(y_{0}a)$, whence $I_{\mathcal P}(y_{0})= I_{\mathcal P}(y_{0}a)$
according to the definition of $y_{0}$.

As $ab\in J=I_{\mathcal P}(y_{0})$, there exists $s\in A\setminus \mathcal P$ such that $(sxy_{0})ab=0$ ;
but then $s(xy_{0}a)b=0$, whence $b\in Ann_{A}(sx(y_{0}a))\subseteq I_{\mathcal P}(y_{0}a)=I_{\mathcal P}(y_{0})=J$.
We have shown that $ab\in J$ implies $a\in J$ or $b\in J$ : $J$ is prime.

As $J\subseteq \mathcal P$ and
$$
Ann_{A}(x)\subseteq Ann_{A}(xy_{0})\subseteq I_{\mathcal P}(y_{0})=J \,\, ,
$$
$\pi_{x}(J)$ is a prime ideal of $\tilde{A}_{x}$ and
$\pi_{x}(J)\subseteq \pi_{x}(\mathcal P)=\mathcal Q$, it now follows from the minimality of $\mathcal Q$ that 
$\pi_{x}(J)=\mathcal Q$.

Let now $u\in \mathcal P$ ; then $\pi_{x}(u)\in \pi_{x}(\mathcal P)=\mathcal Q=\pi_{x}(J)$,
therefore $\pi_{x}(u)=\pi_{x}(j)$ for some $j\in J$. Then there is $y\in Ann_{A}(x)$ such that $u+y=j+y$,
hence $u+y\in J$, and (as $u+(u+y)=u+y$), $u\in \overline{J}=J$.
It follows that $\mathcal P \subseteq J$, whence 
$\mathcal P=J=I_{\mathcal P}(y_{0})$;
in particular, $\mathcal P$ is saturated.

As $A$ is weakly noetherian, there is a finite family $(p_{1},...,p_{n})$ of elements
of $\mathcal P$ such that
$$
\mathcal P=\overline{<p_{1},...,p_{n}>} \,\, .
$$

Each $p_{j}$ belongs to $\mathcal P=J=I_{\mathcal P}(y_{0})$, whence there is
an $s_{j}\in A \setminus \mathcal P$ such that $p_{j}\in Ann_{A}(s_{j}xy_{0})$.
Let $s_{0}:=s_{1}...s_{n}$ and
$$
u:=s_{0}xy_{0}=s_{1}...s_{n}xy_{0} \,\, ;
$$
then each $p_{j}$ belongs to $Ann_{A}(u)$, whence
\begin{eqnarray}
\mathcal P
&=&\overline{<p_{1},...,p_{n}>}\nonumber \\
&\subseteq& \overline{Ann_{A}(u)} \nonumber \\
&=& Ann_{A}(u) \nonumber
\end{eqnarray}
(as $Ann_{A}(u)$ is saturated).

On the other hand, $s_{0}\in A \setminus \mathcal P$, therefore

\begin{eqnarray}
Ann_{A}(u)
&=&Ann_{A}(s_{0}xy_{0})\nonumber \\
&\subseteq&  I_{\mathcal P}(y_{0}) \nonumber \\
&=& \mathcal P \,\, , \nonumber
\end{eqnarray}
whence $\mathcal P=Ann_{A}(u)$.
\end{proof}
\begin{corollary}If $A$ is weakly notherian, then $MinPr(A)=MinPr_{s}(A)$\,\,\, .
\end{corollary}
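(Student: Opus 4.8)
The plan is to deduce this immediately from Theorem 4.1, together with the inclusions $MinPr(A)\subseteq Ass(A)\subseteq Pr(A)$ recorded in \S 2 and the inductivity of $(Pr(A),\supseteqq)$. I would prove the two inclusions $MinPr(A)\subseteq MinPr_{s}(A)$ and $MinPr_{s}(A)\subseteq MinPr(A)$ separately.

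For the first inclusion, let $\mathcal P\in MinPr(A)$. Since $MinPr(A)\subseteq Ass(A)$, $\mathcal P$ is an associated prime ideal, so by Theorem 4.1 it is of the form $Ann_{A}(u)$ for some $u\in A\setminus\{0\}$; in particular it is saturated, i.e.\ $\mathcal P\in Pr_{s}(A)$. As $\mathcal P$ is minimal (for $\subseteq$) among \emph{all} prime ideals of $A$, it is a fortiori minimal among the saturated prime ideals (any saturated prime contained in $\mathcal P$ is in particular a prime contained in $\mathcal P$, hence equal to $\mathcal P$), so $\mathcal P\in MinPr_{s}(A)$.

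For the reverse inclusion, let $\mathcal P\in MinPr_{s}(A)\subseteq Pr(A)$. Since $(Pr(A),\supseteqq)$ is inductive, Zorn's Lemma yields a minimal prime ideal $\mathcal Q\in MinPr(A)$ with $\mathcal Q\subseteq\mathcal P$. By the inclusion already established, $\mathcal Q\in MinPr_{s}(A)$, so $\mathcal Q$ is a saturated prime ideal contained in $\mathcal P$; the minimality of $\mathcal P$ in $Pr_{s}(A)$ forces $\mathcal Q=\mathcal P$, whence $\mathcal P\in MinPr(A)$. Combining the two inclusions gives $MinPr(A)=MinPr_{s}(A)$.

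I do not expect any genuine obstacle here: the substance is entirely contained in Theorem 4.1 (saturatedness of associated primes in the weakly noetherian case), and the remainder is the routine bookkeeping with minimal elements of the two posets $(Pr(A),\supseteqq)$ and $(Pr_{s}(A),\supseteqq)$. The only point requiring a moment's care is the direction $MinPr_{s}(A)\subseteq MinPr(A)$, where one must pass through a minimal prime sitting below the given saturated minimal prime and invoke Theorem 4.1 to see that this minimal prime is itself saturated; but this is exactly what the first inclusion supplies.
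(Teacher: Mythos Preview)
Your argument is correct and is essentially the same as the paper's: both directions use Theorem~4.1 (via $MinPr(A)\subseteq Ass(A)$) to see that minimal primes are saturated, and for $MinPr_{s}(A)\subseteq MinPr(A)$ both pick a minimal prime below the given saturated minimal prime and use its saturatedness to force equality. You are merely a bit more explicit than the paper about why a minimal element of $Pr(A)$ remains minimal in $Pr_{s}(A)$.
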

\begin{proof}
Let $\mathcal P\in MinPr(A)$. As seen in \S 2, $\mathcal P$ is associated,
whence, by Theorem 4.1, $\mathcal P$ is saturated, hence $\mathcal P\in MinPr_{s}(A)$.

Conversely, let $\mathcal P\in MinPr_{s}(A)$ ; then $\mathcal P$ is prime, hence
contains some minimal prime ideal $\mathcal P_{0}$ (cf. \S 2). Now $\mathcal P_{0}$ is saturated
whence (as $\mathcal P_{0}\subseteq \mathcal P$) $$\mathcal P = \mathcal P_{0}\in MinPr(A)\,\, .$$
\end{proof}

\section{Definition and first properties of primary ideals}
\label{sec:5}
The usual theory generalizes without major problem to $B_{1}$--algebras.

\begin{definition}An ideal $\mathcal Q$ of $A$ is termed \it primary \rm
if $\mathcal Q\neq A$ and
$$
\forall (x,y)\in A^{2} \,\, [xy\in \mathcal Q \implies x\in \mathcal Q \,\, \text{or} \,\, (\exists n\geq 1)\,\, y^{n}\in \mathcal Q].
$$
\end{definition}

Obviously, a prime ideal is primary.

\begin{proposition}If $\mathcal Q$ is primary, then $r(\mathcal Q)$ is prime.
\end{proposition}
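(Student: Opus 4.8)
The plan is to transcribe the classical argument, the only point at which the characteristic-one setting plays a role being the verification that $r(\mathcal{Q})$ is an ideal at all.

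First I would record that $r(\mathcal{Q})$ is an ideal of $A$ (if this is not already available from the cited references, it is immediate): stability under multiplication by $A$ follows from $(ax)^{n}=a^{n}x^{n}$, and stability under addition follows from the idempotent binomial expansion. Indeed, if $x^{m}\in\mathcal{Q}$ and $y^{n}\in\mathcal{Q}$, then, all the binomial coefficients being equal to $1$ in a $B_{1}$--algebra, one has $(x+y)^{m+n}=\sum_{i=0}^{m+n}x^{i}y^{m+n-i}$, and each summand lies in $\mathcal{Q}$ since either $i\geq m$ (so $x^{i}\in\mathcal{Q}$) or $m+n-i\geq n$ (so $y^{m+n-i}\in\mathcal{Q}$); hence $(x+y)^{m+n}\in\mathcal{Q}$, i.e. $x+y\in r(\mathcal{Q})$.

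Next I would check that $r(\mathcal{Q})\neq A$: were $1\in r(\mathcal{Q})$, then $1=1^{n}\in\mathcal{Q}$ for some $n\geq 1$, forcing $\mathcal{Q}=A$ and contradicting Definition 5.1. Finally, for the multiplicative property, suppose $xy\in r(\mathcal{Q})$ with $x\notin r(\mathcal{Q})$; I must show $y\in r(\mathcal{Q})$. Choose $n\geq 1$ with $(xy)^{n}=x^{n}y^{n}\in\mathcal{Q}$. Since $x\notin r(\mathcal{Q})$, no power of $x$ lies in $\mathcal{Q}$; in particular $x^{n}\notin\mathcal{Q}$. Applying the primary hypothesis to the product $x^{n}\cdot y^{n}\in\mathcal{Q}$ then yields some $m\geq 1$ with $(y^{n})^{m}=y^{nm}\in\mathcal{Q}$, i.e. $y\in r(\mathcal{Q})$. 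Hence $r(\mathcal{Q})$ is prime.

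I do not anticipate a genuine obstacle: the argument is a routine translation, and the only place where the semiring (rather than ring) setting intervenes is the idempotent binomial identity used to see that $r(\mathcal{Q})$ is closed under addition, which is harmless here precisely because in characteristic one it produces a plain finite sum of monomials with no coefficients to worry about.
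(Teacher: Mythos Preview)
Your argument is correct and essentially identical to the paper's: both pass from $xy\in r(\mathcal Q)$ to $x^{n}y^{n}\in\mathcal Q$ and then invoke the primary hypothesis to force a power of $y$ into $\mathcal Q$. The only difference is that you spell out why $r(\mathcal Q)$ is an ideal via the idempotent binomial expansion, whereas the paper tacitly takes this for granted (the radical is treated as an ideal throughout the earlier papers in the series, cf.\ \cite{12}, Lemma~4.4).
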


\begin{proof}Let $\mathcal P=r(\mathcal Q)$. As $\mathcal Q\neq A$, $1\notin\mathcal Q$, thus
$1\notin \mathcal P$. Let us assume that $uv\in \mathcal P$ ; then, for some $n\geq 1$, $(uv)^{n}\in \mathcal Q$,
\it i.e. \rm $u^{n}v^{n}\in \mathcal Q$, whence (as $\mathcal Q$ is primary) either $u^{n}\in \mathcal Q$
or there exists $m\geq 1$ with $v^{nm}=(v^{n})^{m}\in \mathcal Q$. Therefore either $u$ or $v$
belongs to $r(\mathcal Q)=\mathcal P$ : $\mathcal P$ is prime.
\end{proof}
\begin{remark}As seen in \cite{12}, Lemma 4.4(ii), if $\mathcal Q$ is saturated
then so is $\mathcal P=r(\mathcal Q)$.
\end{remark}
\begin{definition}The primary ideal $\mathcal Q$ will be termed \it $\mathcal P$--primary \rm
if $\mathcal P=r(\mathcal Q)$.
\end{definition}
\begin{lemma}Let $\mathcal Q_{1}$,...,$\mathcal Q_{n}$ be $\mathcal P$--primary ideals for the same prime 
ideal $\mathcal P$ ; then $\mathcal Q:=\mathcal Q_{1}\cap ... \cap \mathcal Q_{n}$ is also $\mathcal P$--primary.
\end{lemma}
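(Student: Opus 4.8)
The plan is to check the two requirements of Definition~5.4 in turn: first that $r(\mathcal Q)=\mathcal P$, and then that $\mathcal Q$ satisfies the primary implication of Definition~5.1.

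For the radical I would first record the elementary fact that, for any \emph{finite} family of ideals, $r(I_{1}\cap\dots\cap I_{n})=r(I_{1})\cap\dots\cap r(I_{n})$. The inclusion $\subseteq$ is immediate from the definition of $r$. For $\supseteq$, if $u$ lies in every $r(I_{i})$, choose $m_{i}\geq 1$ with $u^{m_{i}}\in I_{i}$ and put $m:=\max_{i}m_{i}$; since each $I_{i}$ is an ideal, $u^{m}=u^{m-m_{i}}u^{m_{i}}\in I_{i}$ for all $i$, so $u^{m}\in I_{1}\cap\dots\cap I_{n}$ and $u\in r(I_{1}\cap\dots\cap I_{n})$. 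Applying this with $I_{i}=\mathcal Q_{i}$ and using the hypothesis $r(\mathcal Q_{i})=\mathcal P$ for every $i$ gives $r(\mathcal Q)=\bigcap_{i}r(\mathcal Q_{i})=\mathcal P$. In particular $r(\mathcal Q)=\mathcal P\neq A$, hence $\mathcal Q\neq A$ (alternatively $\mathcal Q\subseteq\mathcal Q_{1}\neq A$).

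For the primary condition, suppose $(x,y)\in A^{2}$ with $xy\in\mathcal Q$ and assume that no power of $y$ lies in $\mathcal Q$, i.e.\ $y\notin r(\mathcal Q)=\mathcal P$. For each $i$ we have $xy\in\mathcal Q_{i}$, and since $\mathcal Q_{i}$ is primary, either $x\in\mathcal Q_{i}$ or $y^{n_{i}}\in\mathcal Q_{i}$ for some $n_{i}\geq 1$. The second alternative would give $y\in r(\mathcal Q_{i})=\mathcal P$, contradicting our assumption; hence $x\in\mathcal Q_{i}$. As this holds for every $i$, $x\in\bigcap_{i}\mathcal Q_{i}=\mathcal Q$. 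Thus $\mathcal Q$ is primary with $r(\mathcal Q)=\mathcal P$, that is, $\mathcal P$--primary.

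I do not anticipate any genuine obstacle: the argument is a straightforward unwinding of the definitions, the only delicate point being that $r$ commutes with intersection over a \emph{finite} index set, which is exactly what we use. One could also obtain the radical computation from Lemma~2.2, but that lemma is stated for saturations and the $\mathcal Q_{i}$ need not be saturated, so the direct argument above is cleaner.
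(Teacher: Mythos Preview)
Your proof is correct and follows essentially the same approach as the paper's: both arguments hinge on the fact that $r$ commutes with finite intersections and on applying the primary condition in each $\mathcal Q_{i}$. The only cosmetic difference is that you first compute $r(\mathcal Q)=\mathcal P$ and then verify the primary implication via its contrapositive, whereas the paper assumes $x\notin\mathcal Q$, deduces that some power of $y$ lies in $\mathcal Q$, and extracts $r(\mathcal Q)=\mathcal P$ as a byproduct.
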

\begin{proof}Let us assume $xy\in \mathcal Q$ and $x\notin \mathcal Q$.
As $x\notin \mathcal Q$, there is a $k\in\{1,...,n\}$ such that $x\notin \mathcal Q_{k}$ ;
as $xy\in \mathcal Q\subseteq \mathcal Q_{k}$, we have $xy\in \mathcal Q_{k}$, whence
(as $\mathcal Q_{k}$ is primary) there exists $n\geq 1$ such that $y^{n}\in \mathcal Q_{k}$.
Then $y\in r(\mathcal Q_{k})=\mathcal P$. As all $\mathcal Q_{i}$'s are $\mathcal P$--primary,
one has, for each $i$, $y\in r(\mathcal Q_{i})$, whence there exists $m_{i}\geq 1$ such that
$y^{m_{i}}\in \mathcal Q_{i}$. Let $m^{'}:=\max_{1\leq i \leq n}(m_{i})$ ;
then $$y^{m^{'}}\in \mathcal Q_{1}\cap ... \cap \mathcal Q_{n}=\mathcal Q\,\, : $$
$\mathcal Q$ is primary.

Incidentally, we have established that $\mathcal P \subseteq r(\mathcal Q)$ ; but
$r(\mathcal Q)\subseteq r(\mathcal Q_{1})=\mathcal P$, whence $\mathcal P=r(\mathcal Q)$ : $\mathcal Q$
is $\mathcal P$--primary.
 
\end{proof}

\section{Weak primary decomposition}
\label{sec:6}
\begin{definition}The $B_{1}$--algebra $A$ is termed \it laskerian \rm if any saturated ideal
of $A$ can be expressed as a finite intersection of saturated primary ideals.
\end{definition}

It is natural to conjecture that each weakly noetherian $B_{1}$--algebra is laskerian,
but this is false, as shown by the following example.

\begin{example}Let $A=\{0,z,x,y,u,1\}$ ; it is easily seen that there 
is a unique structure of $B_{1}$--algebra on $A$ such that 
$z+x=x$, $z+y=y$, $x+y=u$, $u+1=1$,
$x^{2}=x$, $y^{2}=y$, $z^{2}=0$, $u^{2}=u$, $xy=xz=yz=uz=0$, $xu=x$
and $yu=y$. 

Each saturated primary ideal of $A$ contains $0=xy$, therefore it contains either
$x$ or a power of $y$, whence it contains $x$ or $y$, thus it contains $z$. Therefore 
any intersection of saturated primary ideals contains $z$, and $\{0\}$ is not an intersection
of saturated primary ideals : $A$ is not laskerian.
\end{example}

Nevertheless a weaker property holds true.
\begin{theorem}Let $I$ denote a saturated radical ideal in the $B_{1}$--algebra $A$ ;
then $I$ can be written as a finite intersection of prime saturated ideals.
\end{theorem}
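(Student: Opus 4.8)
The idea is to reduce the statement to the fact (recalled in \S 5, Remark 5.4, and underlying the paper's identification of radical saturated ideals with intersections of prime saturated ideals) that $I$ is already an intersection of \emph{all} prime saturated ideals containing it, and then to use a weak-noetherian-free maximality argument — in the spirit of the proof of Theorem 3.1 and of Lemma 2.7 — to show finitely many of them suffice. More precisely, I would argue by a Zorn-type / well-chosen maximal element device on the family of saturated ideals of the form $I = \mathcal{Q}_1 \cap \dots \cap \mathcal{Q}_k$ with each $\mathcal{Q}_j$ prime and saturated: one shows this family is nonempty and that if $I$ is not of this form one can strictly enlarge a certain auxiliary ideal, contradicting maximality. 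This is exactly the template used repeatedly in the paper (``arguments such as the above will often recur'').

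Concretely, first I would set up the family $\mathcal{F}$ of saturated radical ideals $J \supseteq I$ that \emph{cannot} be written as a finite intersection of prime saturated ideals, and suppose for contradiction $I \in \mathcal{F}$. I expect $\mathcal{F}$ to be inductive for $\subseteq$ (a union of a chain of such ideals is again saturated and radical, and if it were a finite intersection of prime saturated ideals, so would be cofinally many terms of the chain — here one needs the chain argument to be handled with some care since the paper does \emph{not} assume weak noetherianity in \S 6). So by Zorn's Lemma pick a maximal $J \in \mathcal{F}$. Since $J$ is radical and saturated but is not itself prime (a prime saturated ideal is trivially a finite intersection of prime saturated ideals), there exist $u, v \notin J$ with $uv \in J$. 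Then $\overline{J + Au}$ and $\overline{J + Av}$ strictly contain $J$; I would pass to their radicals, $I_1 := r(\overline{J+Au})$ and $I_2 := r(\overline{J+Av})$, which are saturated radical ideals strictly larger than $J$, hence each \emph{is} a finite intersection of prime saturated ideals, i.e. $I_1, I_2 \notin \mathcal{F}$.

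The heart of the matter is then to show $J = I_1 \cap I_2$, which by the previous paragraph exhibits $J$ as a finite intersection of prime saturated ideals, the desired contradiction. The inclusion $J \subseteq I_1 \cap I_2$ is clear. For the reverse, take $w \in I_1 \cap I_2$; then $w^m \in \overline{J + Au}$ and $w^n \in \overline{J + Av}$ for some $m,n \geq 1$, so there are $a, b \in A$ and $p, q \in J$ with $w^m + (p + au) = p + au$ and $w^n + (q + bv) = q + bv$ — i.e. $w^m$ is ``absorbed'' into $J + Au$ and $w^n$ into $J + Av$. Multiplying the two absorbed elements and using $uv \in J$ together with the characteristic-one absorption manipulations (exactly as in the displayed computations in the proofs of Theorem 3.1 and Proposition 2.1: expand $(p+au)(q+bv)$, note the cross term lands in $J$, and use saturation to push $w^{m+n}$ into $J$) gives $w^{m+n} \in \overline{J} = J$, hence $w \in r(J) = J$ since $J$ is radical.

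I expect the main obstacle to be exactly the verification that $\mathcal{F}$ is inductive \emph{without} a noetherian hypothesis: one must check that an ascending union of saturated radical ideals, none a finite intersection of prime saturated ideals, is again not such an intersection. The subtlety is that ``finite intersection of primes'' is not obviously closed under directed unions, so I would instead phrase the contradiction differently — rather than Zorn on $\mathcal{F}$, I would take $J$ maximal among saturated ideals \emph{contained in} a fixed $I_1 \cap I_2$-type ideal, or, cleaner, run the argument so that the maximality is used only to guarantee that the two strictly larger ideals $I_1, I_2$ are expressible, which needs no chain-closure of $\mathcal{F}$ itself if one is willing to invoke the weakly-noetherian case as the generic situation and treat $\{0\}$ and general $I$ by the congruence machinery of \cite{13}. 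If the paper's intended proof does avoid weak noetherianity entirely, the right device is almost surely a maximality argument on a single auxiliary ideal (à la $\mathcal{E}$ in Theorem 3.1 or $I_{\mathcal P}(y)$ in Theorem 4.1), and the remaining steps — the absorption computation showing $J = I_1 \cap I_2$, and Lemma 2.7 to get primality of the pieces — are then routine.
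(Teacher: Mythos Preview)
Your approach is essentially the paper's: take a maximal counterexample $J$, note it cannot be prime, pick $u,v\notin J$ with $uv\in J$, form the two larger saturated ideals $K=\overline{J+Au}$ and $L=\overline{J+Av}$, and use the characteristic-one absorption computation (exactly the one you sketch) to show $x\in K\cap L\Rightarrow x^{2}\in J$, hence $x\in r(J)=J$, so $J=K\cap L$ is a finite intersection of prime saturated ideals --- contradiction.

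Two remarks. First, the paper does \emph{not} pass to the radicals $r(K)$, $r(L)$ as you do; it works with $K$ and $L$ directly and simply asserts that, being strictly larger than $J$, they are expressible. Your radical step is harmless and arguably cleaner (it makes explicit that the strictly larger ideals live in the right class), and your computation with exponents $m,n$ collapses to the paper's with $m=n=1$. Second, your worry about inductiveness of $\mathcal F$ is exactly the right worry, and the paper does not address it: it just writes ``let $J$ denote a maximal element'' (with a typo: ``saturated prime'' where ``saturated radical'' is meant) without justification. The ambient hypothesis is weak noetherianity --- note that Corollary 6.4 and Proposition 6.5, the only places Theorem 6.3 is invoked, explicitly add it --- and under that hypothesis the maximal element exists trivially, so no Zorn argument is needed. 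Your attempts to dodge the chain-closure issue by recasting the maximality are unnecessary once you accept that the theorem is tacitly stated in the weakly noetherian setting.
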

\begin{proof}Let us proceed by contradiction, and let $J$ denote a maximal element
of the set of saturated prime ideals that cannot be written as a finite intersection of prime saturated ideals ;
in particular, $J\neq A$ and $J$ is not prime. Therefore one may find $u\notin J$ and $v\notin J$
with $uv\in J$. Let $K=\overline{J+Au}$ and $L=\overline{J+Av}$ ; then $K$ and $L$ are saturated ideals of
$A$ strictly containing $J$, whence each is a finite intersection of prime saturated ideals.
Clearly, $J\subseteq K\cap L$. Let now $x\in K\cap L$ ; then $x+y=y$ for some $y\in J+Au$
and $x+z=z$ for some $z\in J+Av$ ; writing $y=j+au$ ($j\in J$) and $z=j^{'}+bv$ ($j^{'}\in J$)
we get 
$$
vx+vy=v(x+y)=vy \,\, ; 
$$
but $vy=v(j+au)=vj+a(uv)\in J$, whence $vx\in\overline{J}=J$.
But then 
$$
xz=x(j^{'}+bv)=xj^{'}+b(xv)\in J\,\, ,
$$
and 
$$
x^{2}+xz=x(x+z)=xz\,\, .
$$
It follows that $x^{2}\in\overline{J}=J$, whence $x\in r(J)=J$.
We have shown that $J=K\cap L$, hence $J$ is a finite intersection of
saturated prime ideals, a contradiction.

\end{proof}
\begin{corollary}If $A$ is weakly noetherian and $I$ is a saturated ideal of $A$,
there are saturated prime ideals $\mathcal P_{1}$,...,$\mathcal P_{n}$ of $A$
such that
$$
r(I)=\mathcal P_{1}\cap ... \cap \mathcal P_{n}\,\, .
$$
\end{corollary}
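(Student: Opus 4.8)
The plan is to reduce the statement to Theorem 6.3 applied to the ideal $r(I)$, so the only real work is to verify that $r(I)$ satisfies the hypotheses of that theorem, namely that it is saturated and radical.

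First I would check that $r(I)$ is a radical ideal, i.e. that $r(r(I)) = r(I)$. One inclusion is trivial. For the other, if $x \in r(r(I))$ then $x^{n} \in r(I)$ for some $n \geq 1$, hence $(x^{n})^{m} \in I$ for some $m \geq 1$, so $x^{nm} \in I$ and $x \in r(I)$. Next I would note that $r(I)$ is saturated: since $I$ is saturated by hypothesis, this is precisely Remark 5.3 (equivalently \cite{12}, Lemma 4.4(ii)), which asserts that the radical of a saturated ideal is again saturated.

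Having established that $r(I)$ is a saturated radical ideal of the weakly noetherian $B_{1}$--algebra $A$, I would invoke Theorem 6.3, which then produces saturated prime ideals $\mathcal{P}_{1}, \ldots, \mathcal{P}_{n}$ of $A$ with $r(I) = \mathcal{P}_{1} \cap \cdots \cap \mathcal{P}_{n}$; this is exactly the claimed decomposition. The weak noetherianity hypothesis is genuinely needed: it is what supplies the maximal element at the start of the proof of Theorem 6.3. Consequently there is essentially no obstacle to overcome here — the corollary is a direct specialization, and the only steps requiring comment are the two elementary observations above, both of which are immediate from material already recorded in the text.
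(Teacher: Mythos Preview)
Your proposal is correct and follows exactly the paper's own argument: observe that $r(I)$ is saturated (by \cite{12}, Lemma 4.4(ii)) and radical, then apply Theorem~6.3. The paper's proof is a single sentence to this effect; your additional remarks on where weak noetherianity enters are accurate but not strictly required.
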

\begin{proof}According to \cite{12}, Lemma 4.4.(ii), $r(I)$ is saturated
(and, of course, radical), and we may apply Theorem 6.3.
\end{proof}
\begin{proposition}If $A$ is weakly noetherian, then $MinPr(A)$ is finite.
\end{proposition}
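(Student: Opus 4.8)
The plan is to realize $MinPr(A)$ as a subset of the finite family of saturated prime ideals produced by Corollary 6.4. The first step is to observe that the zero ideal $\{0\}$ is saturated: if $x+y=y$ with $y\in\{0\}$, i.e. $y=0$, then $x=x+0=0\in\{0\}$. Hence Corollary 6.4 applies with $I=\{0\}$ and yields saturated prime ideals $\mathcal P_1,\dots,\mathcal P_n$ with $r(\{0\})=\mathcal P_1\cap\dots\cap\mathcal P_n$. It then suffices to prove that every minimal prime ideal of $A$ is one of the $\mathcal P_i$, for that immediately gives $MinPr(A)\subseteq\{\mathcal P_1,\dots,\mathcal P_n\}$.

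So let $\mathcal P\in MinPr(A)$. I would first check that $r(\{0\})\subseteq\mathcal P$: if $x^m=0$ then $x^m\in\mathcal P$, and since $\mathcal P$ is prime, a short induction (writing $x^k=x\cdot x^{k-1}$) gives $x\in\mathcal P$; hence $\mathcal P_1\cap\dots\cap\mathcal P_n=r(\{0\})\subseteq\mathcal P$. Next I would invoke the standard fact that a prime ideal containing a finite intersection of ideals contains one of them: if $\mathcal P_i\not\subseteq\mathcal P$ for every $i$, choose $a_i\in\mathcal P_i\setminus\mathcal P$; then, $\mathcal P_i$ being an ideal, $a_1\cdots a_n$ lies in each $\mathcal P_i$, hence in $\mathcal P_1\cap\dots\cap\mathcal P_n\subseteq\mathcal P$, and primality of $\mathcal P$ forces some $a_i\in\mathcal P$, a contradiction. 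Therefore $\mathcal P_i\subseteq\mathcal P$ for some $i$; since $\mathcal P_i$ is prime while $\mathcal P$ is minimal among prime ideals, $\mathcal P=\mathcal P_i$.

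This yields $MinPr(A)\subseteq\{\mathcal P_1,\dots,\mathcal P_n\}$, so $MinPr(A)$ is finite (one may further discard the $\mathcal P_i$ that are not minimal to obtain an exact description; note also that by Corollary 6.2 such a $\mathcal P$ is automatically saturated, consistent with the $\mathcal P_i$ being saturated). I do not anticipate a genuine obstacle: the argument is the classical one, and each ingredient — Corollary 6.4, the inclusion of $r(\{0\})$ in every prime ideal, and the reverse prime-avoidance step — carries over verbatim to $B_1$-algebras. The only point deserving a moment's attention is the verification that $\{0\}$ is saturated, which must precede the application of Corollary 6.4, and that verification is immediate.
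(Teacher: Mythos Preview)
Your proof is correct and follows essentially the same route as the paper: apply Corollary 6.4 to $I=\{0\}$ to write $r(\{0\})=\mathcal P_1\cap\dots\cap\mathcal P_n$, then use the product argument (``reverse prime avoidance'') to show any $\mathcal P\in MinPr(A)$ must equal some $\mathcal P_i$. Your explicit checks that $\{0\}$ is saturated and that $r(\{0\})\subseteq\mathcal P$ are welcome details the paper leaves implicit.
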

\begin{proof}Let us apply Corollary 6.3 to $I=\{0\}$ ; we obtain the existence of a finite family
$\mathcal P_{1}$,...,$\mathcal P_{n}$ of saturated prime ideals of $A$ such that
$$
Nil(A)=r(\{0\})=\mathcal P_{1}\cap ...\cap \mathcal P_{n}\,\, .
$$

Let us suppose that no $\mathcal P_{j}$($1\leq j \leq n$) be contained in $\mathcal P$;
then one may find, for each $j\in\{1,...,n\}$, $x_{j}\in \mathcal P_{j}$, $x_{j}\notin \mathcal P$.
It ensues that
$$
x_{1}...x_{n}\in \mathcal P_{1}\cap ...\cap \mathcal P_{n}=Nil(A)\subseteq \mathcal P \,\,
$$
and each $x_{j}\notin \mathcal P$, contradicting the definition of $\mathcal P$.
Therefore, for some $j$, $\mathcal P_{j}\subseteq \mathcal P$, whence (by the minimality of $\mathcal P$)
$\mathcal P_{j}=\mathcal P$. We have shown that
$$
MinPr(A)\subseteq \{\mathcal P_{1},...,\mathcal P_{n}\} \,\, ;
$$
in particular, $MinPr(A)$ is finite.
\end{proof}
\begin{remark}Incidentally, we have reestablished Corollary 4.2, as all 
$\mathcal P_{j}$'s are saturated.
\end{remark}

\section{The Evans condition}
\label{sec:7}

For $A$ a $B_{1}$--algebra and $I$ an ideal of $A$, let
\begin{eqnarray}
\mathcal D_{A}(I) \nonumber
&:=&\{x\in A \vert (\exists y\notin I)\,\, xy\in I\} \nonumber \\
&=&\bigcup_{y\notin I}\mathcal C_{y}(I) \,\, .\nonumber
\end{eqnarray}
Obviously, if $A$ is nontrivial, 
$$
\mathcal D_{A}(\{0\})=\mathcal D_{A}\cup\{0\}\,\, .
$$
\begin{definition}(see \cite{4}, and also \cite{8}, ch.3, pp.121--122) 

The $B_{1}$--algebra $A$ \it has the Evans property \rm
if, for each saturated prime ideal $I$ of $A$, $\mathcal D_{A}(I)$ is a finite union of
saturated prime ideals of $A$.
\end{definition}
\begin{remark}(to be compared with Theorem 3.1) If $A$ has the Evans property, then $A$
is standard (take $I=\{0\}$).
\end{remark}
\begin{theorem}If $A$ is laskerian, then it has the Evans property.
\end{theorem}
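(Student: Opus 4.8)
The plan is to carry over to $B_1$--algebras the classical description of $\mathcal D_A(I)$ in terms of the primes attached to a primary decomposition of $I$. Let $I$ be a saturated ideal of $A$; we may assume $I\neq A$ (otherwise $\mathcal D_A(I)=\emptyset$), and if $I$ is prime then already $\mathcal D_A(I)=I$ and there is nothing to do, so the substance concerns a general saturated ideal $I$. As $A$ is laskerian, write $I=\mathcal Q_1\cap\dots\cap\mathcal Q_n$ with each $\mathcal Q_i$ a saturated primary ideal, and, deleting superfluous terms one at a time, assume the representation irredundant, i.e. $\bigcap_{j\neq i}\mathcal Q_j\not\subseteq\mathcal Q_i$ for every $i$. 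Put $\mathcal P_i:=r(\mathcal Q_i)$; by Proposition 5.2 each $\mathcal P_i$ is prime, by Remark 5.3 it is saturated, and it is proper because $1\notin\mathcal Q_i$ forces $1\notin r(\mathcal Q_i)$. The goal is the equality
$$
\mathcal D_A(I)=\mathcal P_1\cup\dots\cup\mathcal P_n ,
$$
which at once gives the Evans property for $I$.

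For $\mathcal D_A(I)\subseteq\bigcup_i\mathcal P_i$ I would argue by contraposition. Suppose $x\notin\mathcal P_i$ for every $i$. Fix $i$ and take $z\in\mathcal C_x(\mathcal Q_i)$, so $zx\in\mathcal Q_i$; by Definition 5.1 either $z\in\mathcal Q_i$ or $x^m\in\mathcal Q_i$ for some $m\geq1$, and the latter is impossible since $x\notin r(\mathcal Q_i)$, so $z\in\mathcal Q_i$. Hence $\mathcal C_x(\mathcal Q_i)=\mathcal Q_i$, and as $\mathcal C_x(I)=\bigcap_i\mathcal C_x(\mathcal Q_i)$ we get $\mathcal C_x(I)=\bigcap_i\mathcal Q_i=I$. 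Thus no $y\notin I$ satisfies $xy\in I$, i.e. $x\notin\mathcal D_A(I)$. This direction uses nothing beyond $I=\bigcap_i\mathcal Q_i$.

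For the reverse inclusion I would exploit irredundancy: for each $i$ fix $y_i\in\bigcap_{j\neq i}\mathcal Q_j$ with $y_i\notin\mathcal Q_i$, so $y_i\notin I$ (as $I\subseteq\mathcal Q_i$). Given $x\in\mathcal P_i=r(\mathcal Q_i)$, pick $k\geq1$ with $x^k\in\mathcal Q_i$; then $x^ky_i\in\mathcal Q_i$ (ideal property) and $x^ky_i\in\mathcal Q_j$ for $j\neq i$ (because $y_i\in\mathcal Q_j$), so $x^ky_i\in I$. Hence $\{\,m\geq1\mid x^my_i\in I\,\}$ is non--empty; let $m_0$ be its least element and set $z:=x^{m_0-1}y_i$. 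Then $z\notin I$ (it is $y_i$ when $m_0=1$, and is not in $I$ by minimality of $m_0$ when $m_0>1$), while $xz=x^{m_0}y_i\in I$; therefore $x\in\mathcal D_A(I)$. So $\mathcal P_i\subseteq\mathcal D_A(I)$ for every $i$, and the equality follows.

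I do not anticipate a serious obstacle: once the material of \S\S 5--6 is in hand, this is in substance the classical argument, and the characteristic-one subtleties are absorbed by Proposition 5.2 and Remark 5.3, which guarantee that the $\mathcal P_i$ are saturated prime ideals. The points requiring a little care are the reduction to an irredundant decomposition — a routine finite process needing no extra hypothesis — and the bookkeeping with the exponent $m_0$ in the second inclusion. In particular one does not need the radicals $\mathcal P_i$ to be pairwise distinct: Lemma 5.5 would allow one to arrange this, but it is not needed here.
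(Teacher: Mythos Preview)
Your argument is correct and follows essentially the same route as the paper: take an irredundant (equivalently, minimal-length) primary decomposition $I=\bigcap_i\mathcal Q_i$, set $\mathcal P_i=r(\mathcal Q_i)$, and prove $\mathcal D_A(I)=\bigcup_i\mathcal P_i$ by the two inclusions you give; the paper argues the first inclusion directly rather than by contraposition and swaps the roles of $x$ and $y$, but the substance is identical.
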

\begin{proof}We follow closely \cite{4}, Proposition 7.
Let $I$ denote a saturated ideal of $A$ ; then one may write
$$
I=\mathcal Q_{1}\cap ... \cap \mathcal Q_{n} \,\, (n\in \mathbf N)
$$
where each $\mathcal Q_{i}$ is saturated and primary.
Let us choose such a decomposition with $n$ minimal, and, for each $j$, set
$\mathcal P_{j}=r(\mathcal Q_{j})$ ; according to Proposition 5.2,
$\mathcal P_{j}$ is prime (and saturated).

Let $y\in \mathcal D_{A}(I)$ ; there is $x\notin I$ such that $yx\in I$. As $x\notin I=\mathcal Q_{1}\cap ... \cap \mathcal Q_{n}$, there exists
$j\in\{1,...,n\}$ such that $x\notin \mathcal Q_{j}$. 

As $xy=yx\in I\subseteq \mathcal Q_{j}$, 
$xy\in \mathcal Q_{j}$ ; therefore, as $\mathcal Q_{j}$ is primary, there is a $m\geq 1$ such that $y^{m}\in \mathcal Q_{j}$, whence
$y\in r(\mathcal Q_{j})=\mathcal P_{j}$. We have shown that
$$
\mathcal D_{A}(I)\subseteq \mathcal P_{1}\cup ... \cup \mathcal P_{n} \,\, .
$$

Conversely, let $y\in \mathcal P_{1}\cup ... \cup \mathcal P_{n}$; then $y\in \mathcal P_{j}$ for some
$j$. Let
$$
K_{j}=\bigcap_{i=1 ;i\neq j}^{n}\mathcal Q_{i}\,\, ;
$$
according to our choice of $n$, $K_{j}\neq I$; as $I=K_{j}\cap \mathcal Q_{j}$, one has
$K_{j}\nsubseteq \mathcal Q_{j}$, whence there exists $b\in K_{j}$, $b\notin \mathcal Q_{j}$. As 
$y\in \mathcal P_{j}$, $y^{m}\in \mathcal Q_{j}$ for some $m\geq 1$,
therefore $y^{m}b\in K_{j}\cap \mathcal Q_{j}=I$.

But $y^{0}b=b\notin I$ (as $b\notin \mathcal Q_{j}$) ; therefore there is a (unique) $k\in\mathbf N$
such that $y^{k}b\notin I$ and $y^{k+1}b\in I$.
Let $z:=y^{k}b$; then $z\notin I$ and $yz=y^{k+1}b\in I$, hence
$y\in \mathcal D_{A}(I)$.
Thus
$$
\mathcal D_{A}(I)=\mathcal P_{1}\cup ... \cup \mathcal P_{n} \,\, ,
$$
as desired.
\end{proof}
\begin{theorem}
If $A$ is weakly noetherian, then it has the Evans property.
\end{theorem}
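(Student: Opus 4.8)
The plan is to realize $\mathcal D_A(I)$ as the union of the maximal $I$--conductors and then, using only weak noetherianity, to show that there are finitely many of them.

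First I would fix a saturated ideal $I$ of $A$ and set up the framework. By definition $\mathcal D_A(I)=\bigcup_{y\notin I}\mathcal C_y(I)$, and each $\mathcal C_y(I)$ is a saturated ideal (see the observation after Definition 2.7). Weak noetherianity ensures that every $I$--conductor is contained in a maximal $I$--conductor, so $\mathcal D_A(I)$ is the union of the maximal $I$--conductors; by Lemma 2.8 each of these is prime, and it is saturated because $I$ is. Hence it suffices to prove that the set of maximal $I$--conductors is finite, for then $\mathcal D_A(I)$ is a finite union of saturated prime ideals, which is exactly the Evans property. One could instead replace $A$ by $A/\mathcal R_I$, again weakly noetherian, and so reduce to $I=\{0\}$, i.e. to the assertion that a weakly noetherian $B_1$--algebra is standard; but this does not shorten the argument.

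For the finiteness I would argue by contradiction. Suppose $\mathcal P_1,\mathcal P_2,\dots$ are pairwise distinct maximal $I$--conductors, say $\mathcal P_k=\mathcal C_{m_k}(I)$ with $m_k\notin I$, and put $K_n:=\overline{<I,m_1,\dots,m_n>}$, an ascending chain of saturated ideals. The key claim is that every inclusion $K_n\subseteq K_{n+1}$ is strict. Otherwise $m_{n+1}\in K_n$, so $m_{n+1}+w=w$ for some $w=i+c_1m_1+\dots+c_nm_n$ with $i\in I$ and $c_j\in A$. Multiplying this relation by an arbitrary $a\in\mathcal P_1\cap\dots\cap\mathcal P_n$, and using $am_j\in I$ for $j\le n$, the element $w':=ai+\sum_{j\le n}c_j(am_j)$ lies in $I$ and satisfies $am_{n+1}+w'=w'$; saturation of $I$ then gives $am_{n+1}\in I$, i.e. $a\in\mathcal C_{m_{n+1}}(I)=\mathcal P_{n+1}$. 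Thus $\mathcal P_1\cap\dots\cap\mathcal P_n\subseteq\mathcal P_{n+1}$, and since $\mathcal P_{n+1}$ is prime the prime--avoidance argument used in the proof of Proposition 6.5 forces $\mathcal P_j\subseteq\mathcal P_{n+1}$ for some $j\le n$, whence $\mathcal P_j=\mathcal P_{n+1}$ by maximality of both conductors — contradicting distinctness. So $(K_n)$ would be an infinite strictly ascending chain of saturated ideals, contradicting weak noetherianity; hence there are only finitely many maximal $I$--conductors.

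The routine points here are the explicit description of the elements of $\overline{<I,m_1,\dots,m_n>}$ and the verification that multiplying $m_{n+1}+w=w$ by $a$ and invoking saturation really does put $am_{n+1}$ into $I$; both are of the same flavour as the manipulations in the proofs of Theorems 3.1, 4.1 and 6.3. The only real obstacle is conceptual: the classical proof of such a finiteness statement rests on a module--theoretic prime filtration, which has no obvious counterpart here, so one must find a substitute. The chain $(K_n)$, built directly out of the conductors, is that substitute, and this is precisely why weak noetherianity alone suffices even though $A$ need not be laskerian.
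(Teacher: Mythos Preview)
Your proof is correct and is essentially the paper's argument: both reduce the Evans property to the finiteness of the maximal $I$--conductors, and both obtain that finiteness from the single computation that $\mathcal P_1\cap\dots\cap\mathcal P_n\subseteq\mathcal C_x(I)$ whenever $x$ lies in the saturated ideal generated by the witnesses $m_1,\dots,m_n$ (your saturation step $am_{n+1}+w'=w'\Rightarrow am_{n+1}\in I$ is exactly the paper's verification that $R\subseteq\mathcal C_u(I)$). The only difference is packaging: the paper applies weak noetherianity directly by writing $R=\overline{\langle\mathcal E\rangle}=\overline{\langle y_1,\dots,y_n\rangle}$ and concluding that every maximal conductor equals some $\mathcal C_{y_j}(I)$, whereas you run the contrapositive and produce a strictly ascending chain $(K_n)$.
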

\begin{proof}

We shall adapt the reasoning used in the proof of Lemma 7
from \cite{16}.

Let $I$ denote a saturated ideal of $A$ ; according to the weak noetherianity hypothesis, each $I$--conductor
is contained in a maximal one.

Let $\mathcal E$ denote the set of $y\in A\setminus I$ such that
$\mathcal C_{y}(I)$ is maximal, and let
$$
R=\overline{<\mathcal E>}\,\, .
$$

Using once more the weak noetherianity hypothesis, one finds
a finite family $(y_{1},...,y_{n})\in \mathcal E^{n}$
such that
$$
R=\overline{<y_{1},...,y_{n}>}\,\, .
$$
By definition of $\mathcal E$ and Lemma 2.8, each $P_{j}:=\mathcal C_{y_{j}}(I)$ is prime.
Let 
$$
u\in \mathcal P_{1}\cap ... \cap \mathcal P_{n}\,\, ;
$$
then, by definition, $uy_{j}\in I$ for each $j$, whence
$$
<y_{1},...,y_{n}>\subseteq \mathcal C_{u}(I)
$$
and
$$
R=\overline{<y_{1},...,y_{n}>}\subseteq \mathcal C_{u}(I)
$$
(as $\mathcal C_{u}(I)$ is saturated).

Let now $x\in\mathcal E$ and $\mathcal P=\mathcal C_{x}(I)$ ; then $x\in R$, whence
$x\in \mathcal C_{u}(I)$ and $ux\in I$. It follows that $u\in \mathcal C_{x}(I)=\mathcal P$.
Therefore
$$
\mathcal P_{1}\cap ... \cap \mathcal P_{n}\subseteq \mathcal P, 
$$
thus
$$
\mathcal P_{1}... \mathcal P_{n}\subseteq \mathcal P \,\, ;
$$
as in the proof of Proposition 6.5, it follows that, for some $j$,
$\mathcal P_{j}\subseteq \mathcal P$, whence (by maximality) $\mathcal P_{j}=\mathcal P$. Therefore
the set of maximal $I$--conductors is contained in $\{\mathcal P_{1},...,\mathcal P_{n}\}$ ;
in particular, it is finite.

Thus, the maximal elements of $\mathcal E$ are finite in number ; but they are prime ideals
(Lemma 2.8), and $\mathcal D_{A}(I)$ is their union.

\end{proof}

\section{Bibliography}

\end{document}